\documentclass[11]{siamltex}
\usepackage{amsmath,amssymb,amsfonts,graphicx,fancybox,shadow,color}
\usepackage{hyperref}

\newtheorem{conjecture}[theorem]{Conjecture}

\newcommand{\RR}{{\mathbb{R}}}
\newcommand{\CC}{{\mathbb{C}}}

\def\tu {\tilde{u}}
\def\tF {\tilde{F}}

\def\uu{{\mathbf u}}
\def\UU{{\mathbf U}}

\def\be#1\ee{\begin{equation}#1\end{equation}}

\begin{document}
\bibliographystyle{plain}

\pagestyle{myheadings}
%\markboth{V.~Druskin, V.~Simoncini and M.~Zaslavsky}{Adaptive rational Krylov space}

\title{  Lippmann-Schwinger-Lanczos algorithm for inverse scattering problems}
\author{
 V. Druskin\footnotemark[1],  S. Moskow\footnotemark[2] and M. Zaslavsky\footnotemark[3]}

\renewcommand{\thefootnote}{\fnsymbol{footnote}}

\footnotetext[1]{Worcester Polytechnic Institute, Department of Mathematical Sciences,
Stratton Hall,
100 Institute Road, Worcester MA, 01609 (vdruskin1@gmail.com)}
\footnotetext[2]{Department of Mathematics, Drexel University, Korman Center, 3141 Chestnut Street, Philadelphia, PA 19104
(moskow@math.drexel.edu)}
\footnotetext[3]{Schlumberger-Doll Research Center, 1 Hampshire St., 
Cambridge, MA 02139-1578 (mzaslavsky@slb.com)}

\maketitle

\begin{abstract} \  Data-driven reduced order models (ROMs) are combined with the  
Lippmann-Schwinger integral equation to produce a direct nonlinear inversion method. The ROM is viewed as a Galerkin projection and is sparse due to Lanczos orthogonalization. Embedding into the continuous problem, a data-driven internal solution is produced. This internal solution is then used in the Lippmann-Schwinger equation, thus making further iterative updates unnecessary. We show numerical experiments for spectral domain domain data for which our  inversion is far superior to the Born inversion and  works as well as when the true internal solution is known.  \end{abstract}

%%%%%%%%%%%%%%%%%%%%%%%%%%%%%%%%%%%%%%%%%%%%%%%%%%%
\section{Introduction}
%\cite{borcea2005continuum,borcea2014model,druskin2016direct,druskin2018nonlinear,borcea2017untangling,borcea2019robust, BoDrMaMoZa,druskin1999gaussian,druskin2002three}
This work extends the reduced order model (ROM) approach to inverse impedance, scattering and diffusion problems  that was developed   in the sequel of papers \cite{borcea2011resistor,borcea2014model,druskin2016direct,druskin2018nonlinear,borcea2017untangling,borcea2019robust,BoDrMaMoZa,borcea2020reduced}. 
This approach to solve multidimensional inverse problems using a ROM framework can be summarized by the following:
\begin{enumerate}
\item The boundary data, (discrete partial Dirichlet-to-Neumann maps as in \cite{borcea2011resistor} or their transient variant \cite{druskin2016direct,druskin2018nonlinear,borcea2017untangling,borcea2019robust, BoDrMaMoZa}) is matched by data-driven ROMs. The ROMs can be written as a tridiagonal or block-tridiagonal matrices  for one-dimensional and multi-dimensional problems respectively.  Data matching is performed via a direct layer stripping algorithm or a sequence of such algorithms.
\item The data-driven ROMs implicitly embed boundary data back into the interior as a discrete network. The network's coefficients are approximately localized averages of the PDE coefficients.  Approximate linear maps between these averages and the network coefficients can be found via so-called "optimal grid" or "`finite-difference Gaussian quadrature" \cite{borcea2011resistor,druskin2016direct}, 
or this map can be used as a preconditioner in optimization algorithms. \cite{borcea2014model,borcea2020reduced}.  
\end{enumerate}
In short, the main nonlinearity of the inverse problem is absorbed during the first stage thanks to the layer stripping, which is intrinsically nonlinear. This allows one to treat the transformed data as almost linear with respect to PDE coefficients. The layer stripping in essence is just the Euclidean polynomial division algorithm, which was further developed in the seminal works of Stieltjes and Lanczos, and later in the electrical circuit community. It also related to seminal works from the Soviet school (Marchenko, Gelfand, Levitan and Krein) on inverse spectral problems.  The origin of the embedding concept can be be traced to Krein \cite{kac1974spectral}, and to the idea of the spectrally matched second order staggered finite-difference grids first introduced in \cite{druskin1999gaussian}. These "optimal grids" or "finite-difference Gaussian quadrature"  rules provided a clear geometric interpretation of the data driven network's coefficients \cite{BoDr}.  
%
%Aside fro inverse problems, it has number of application, ranging from approximation of exterior PDE problems, e.g., \cite{druskin2016near}, to cluster analysis of big data sets on graphs \cite{druskin2018clustering}.

It is well known that data-driven ROMs can be rewritten in projection form, for example as a Galerkin system, with the same matrix for state variable realization in the interior, see for example \cite{doi:10.1137/1.9781611974829.ch7}. The relationship between these projections and finite-difference Gaussian quadrature rules was first studied in \cite{druskin2002three} and further developed in \cite{BoDrMaMoZa}. 
A critical property of the projection solution in the interior, which was first noticed in \cite{druskin2016direct}, is that the Galerkin basis corresponding the unknown coefficient is very close to the one from the homogeneous problem. 
This is a property particular to the sparse realization mentioned above, and would not be possible, for example, if one were using the full stiffness and mass matrices appearing in the Loewner product formulation normally used for data-driven ROM construction. 
Because of the sparse structure of of the system (tridiagonal finite-difference in 1D problems), the basis functions are localized and depend only weakly on the media perturbations. Rigorous analysis of this property can be obtained using the previously mentioned Marchenko, Gelfand, Levitan and Krein approach and is in progress at the moment \cite{BorceaZimmerling}.

Numerical experiments have demonstrated that the weak dependence of the basis functions on the unknown coefficient also holds in the multidimensional setting, and this gave rise to so-called nonlinear back-projection imaging algorithm \cite{druskin2018nonlinear}. 
The backprojection algorithm worked very well for time-domain wave problems where a larger system and higher resolution was possible, but failed in diffusion problems, allowing only small systems due to inherent ill-posedness. To overcome this problem, in \cite{BoDrMaMoZa} the authors with collaborators introduced the idea of using the projection formulation of the ROM to generate internal solutions from boundary data only. The approximate solution of inverse problem could then be computed by simply taking the Laplacian of the internal solution and dividing. This approach worked better than back projection for small data sets, but the derivatives amplified the approximation error of the internal solution and prevented its use for somewhat larger size systems. 

The main idea of this work is to use the data-generated internal solution $\tilde{u}_p$ (corresponding to unknown coefficient $p$) introduced in \cite{BoDrMaMoZa} in the Lippmann-Schwinger integral equation framework. If we consider the data $F_p$ for the unknown coefficient $p$ and the corresponding background data $F_0$ corresponding to a homogeneous or other known background, then 
the Lippmann-Schwinger integral equation with respect to the unknown $p$ can be written as \be\label{eq:LipSwi} F_p -F_0 =-\langle u_0,p u_p\rangle \ee
where $\langle ,\rangle$  is the continuous $L^2$ inner product on the PDE domain, and where $u_p, u_0$ are the unknown and background internal solutions respectively.
Because of the dependence of $u_p$ on $p$, equation (\ref{eq:LipSwi}) is generally nonlinear. The standard linearization is given by the famous Born approximation 
\be\label{eq:LipSwiBorn} F_p -F_0 \approx -\langle u_0,p u_0 \rangle, \ee
in which one assumes that  $u_p\approx u_0$, which is accurate only for small $p$. In this paper we suggest to use the more accurate approximation, the data generated internal solution ${\bf u}_p$.  This yields 
\be\label{eq:LipSwiL} F_p -F_0  \approx -\langle u_0,p {\bf u}_p\rangle.\ee
Indeed, the internal solution ${\bf u}_p$ depends on $p$, however, it can be precomputed directly from the data without knowing $p$. Consequently, (\ref{eq:LipSwiL}) is linear with respect to $p$. 
As we shall see, formula (\ref{eq:LipSwiL}) relies on the Lanczos algorithm for the computation of the ROM which is used to find ${\bf u}_p$, which is why we call (\ref{eq:LipSwiL}) the Lippmann-Schwinger-Lanczos equation.

As a by-product, the Lippmann-Schwinger-Lanczos approach resolves a number of problems arising in the data-driven ROM framework. Recently in \cite{borcea2020reduced}, the map between the ROM and $p$ was computed via iteration, and required multiple solutions of forward problems for different $p$. The Lippmann-Schwinger-Lanczos approach yields an explicit expression for this map. Furthermore, this approach also allows for a more rigorous derivation of the back-projection algorithm, and suggests natural extensions to more general data sets.

Finally, we should point out, that there are known approaches using internal solutions in the Lippmann-Schwinger framework, of which Marchenko redatuming is the closest to the suggested method here, for example, see \cite{Diekmann2020ImagingWT}. The main difficulty with Marchenko  redatuming lies in the accurate approximation of the inverse scattering transform in the continuous setting, and in the evaluation of some integrals. The Lanczos based ROM approach here can be interpreted as a linear-algebraic realization of the Marchenko-Gelfand-Levitan method \cite{druskin2016direct} with a data-driven spectral discretization, and as such, promises the best possible accuracy for a given data-set. As we shall see in our numerical experiments,  with as little as 5 Laplace frequencies and 8 source-receiver positions, our approach produces  results which are indistinguishable from the Lippmann-Schwinger formulation using the exact internal solutions.

This paper is organized as follows. In Section 2 we describe the entire process in detail for a one dimensional, single input single output (SISO) problem. We include the entire process here both for completeness of presentation and to extend \cite{BoDrMaMoZa} to complex data sets. This includes the construction of the ROM from the data, the Lanczos orthogonalization process, the generation of the internal solution and its use in the Lippmann-Schwinger equation. 
The generalization of this process to multiple input multiple output (MIMO) problems in higher dimensions is described in Section 3. Section 4 contains numerical experiments, and in the appendix we describe how the Lippmann-Schwinger Lanczos algorithm is related to other approaches to inversion using the data driven ROM.

\section{One dimensional SISO problem}
We begin this work with the one dimensional problem, since the presentation is simpler, and the ideas extend naturally to higher dimensions. In the first subsection we describe the problem setup and in the second subsection we show how one constructs the ROM from the data. In the third  and fourth subsections we discuss  tridiagonalization  of the ROM and generation of the internal solution from data only.  In the last subsection we show how to use the internal solutions in the Lippmann-Schwinger equation in order to solve the fully nonlinear inverse problem. 

  \subsection{Description of the SISO problem} We start by considering the single input single output (SISO) inverse problem in one dimension
	\be\label{eq:1D} -\frac{d^2u(x,\lambda)}{dx^2} +p(x)u(x,\lambda)+\lambda u(x,\lambda)=g(x) \quad \frac{du}{dx}|_{x=0}=0,\ \frac{du}{dx}|_{x=L}=0,
	\ee
	where $0<L\le\infty$. The source $g(x)$ is assumed to be a compactly supported real distribution localized near the origin, for example, roughly speaking, $ g = \delta(x-\epsilon)$ with small $\epsilon>0$. (Note that when $g$ is a delta function at the origin this corresponds to an inhomogeneous Neumann boundary condition.) We can write the solution formally as
	\be\label{eq:resolvent} u= \left(-\frac{d^2}{dx^2} +{p}I+\lambda I\right)^{-1}g \ee
	where the inverse is understood to correspond to homogeneous Neumann boundary conditions. 
Consider  $\lambda_j\in \CC\setminus \RR_-$, $j=1,\ldots, m$, with $\Im \lambda_j\ge 0$.  { We note that for nonnegative $p$ the above resolvent is well defined for $\lambda$ off of the negative real axis. }
The SISO transfer function is then 
\be\label{eq:transfer}
	F(\lambda)=\int_0^L g(x) u(x,\lambda)dx =\langle g,u\rangle =\langle g,\left(-\frac{d^2}{dx^2} +{p}I+\lambda I\right)^{-1}g\rangle \ee
where throughout the paper we use $\langle, \rangle$ to denote the continuous Hermitian $L^2$ inner product $$\langle w,v\rangle=\int_0^L \bar{w}(x)v(x)dx ,$$ which in the one dimensional case is $L^2(0,L)$.
For $2m$ real data points, that is, for $\Im \lambda_j=0$,  we consider the data 
\begin{equation} \label{realdata} F(\lambda)|_{\lambda=\lambda_j}\in\RR, \ \ \ \frac{dF(\lambda)}{d\lambda}|_{\lambda=\lambda_j}\in \RR \ \ \ \mbox{for} \ \  j=1,\ldots, m.\end{equation}  For complex data points, we consider just \begin{equation}\label{complexdata} F(\lambda)|_{\lambda=\lambda_j}\in\CC \ \ \ \mbox{for} \ \  j=1,\ldots, m.\end{equation} Note that the complex case is equivalent to also having data at $\bar \lambda_j$, since $F(\overline{\lambda})=\overline{F}(\lambda)$ from the fact that $g$ and $p$ are real. From this one can see that for $\lambda_j$ close to real,  
\begin{eqnarray} \frac{dF(\lambda)}{d\lambda}|_{\lambda=\Re{\lambda_j}} &=& \lim_{\Im\lambda_j\to 0} {F(\lambda_j)-F(\overline{\lambda_j}) \over{ \lambda_j -\overline{\lambda_j}}} \nonumber \\  &=& \lim_{\Im\lambda_j\to 0} { \Im F(\lambda_j)\over {\Im{\lambda_j}}} \end{eqnarray} and hence the real data can be viewed as the natural extension of complex data.  The SISO inverse problem is then to determine $p(x)$ in (\ref{eq:1D}) from the data (\ref{complexdata}) or (\ref{realdata}).

\subsection{Construction of the data-driven ROM}

We will treat $u(x,\lambda_j)$ as a continuous basis function (or it can be viewed as an infinite dimensional vector column) and consider the projection subspace $$\mathbb{U}=\mbox{span}\{u_1(x)=u(x,\lambda_1),\ldots , u_m(x)=u(x,\lambda_m)\}.$$  We define the data-driven ROM as the Galerkin system for this subspace \be\label{eq:ROM}
Sc(\lambda)+\lambda Mc(\lambda)=b
\ee
where $S,M\in\CC^{m\times m}$ are Hermitian positive definite matrices with the stiffness matrix $S$ given by $$S_{ij}=\langle {u}'_i,u'_j\rangle +\langle p{u}_i,u_j\rangle $$ and mass matrix $M$ given by  $$M_{ij}=\langle {u}_i,u_j\rangle. $$ The right hand side $b\in\CC^m$ is a column vector with components $$b_j =\langle  {u}_j, g  \rangle, $$ and the Galerkin solution for the system is determined by the vector valued function of $\lambda$,  $c(\lambda)\in\CC^{m}$.  Note that $c(\lambda)$ corresponds to a column vector of coefficients of the solution with respect to the above basis of exact solutions. The matrices $S$ and $M$ are obtained from matching conditions that ensure that the ROM transfer function  \[ \tF(\lambda)=b^*c\]
matches the data (here and below $^*$ denotes conjugate transpose). For real $\lambda_i$ this is the same as in \cite{BoDrMaMoZa}. For complex $\{\lambda_j\}^m_{j=1}$ multiplying (\ref{eq:1D}) for $\lambda=\lambda_i$ by $\bar{u}_j$ in the inner product $\langle\cdot ,\cdot \rangle$ and integrating by parts we obtain
\begin{equation}
\label{eq:loew1}
\bar{S}_{ij}+\lambda_i\bar{M}_{ij}=\bar{F}(\lambda_j).
\end{equation}
Similarly, multiplying the conjugate of (\ref{eq:1D}) for $\lambda=\bar{\lambda}_j$ by $u_i$ in the inner product $\langle\cdot ,\cdot \rangle$, we obtain
\begin{equation}
\label{eq:loew2}
S_{ij}+\lambda_jM_{ij}=b_i,
\end{equation}
where $b_i=\bar{F}(\lambda_i)$.
Subtracting the complex conjugate of (\ref{eq:loew1}) from (\ref{eq:loew2}) we obtain the expression for the mass matrix
\begin{equation}
\label{eq:massmtr}
M_{ij}=\frac{\bar{F}(\lambda_i)-F(\lambda_j)}{\lambda_j-\bar{\lambda}_i}.
\end{equation}
Similarly, the elements of stiffness matrix have the form 
\begin{equation}
\label{eq:stifmtr}
S_{ij}=\frac{F(\lambda_j)\lambda_j-\bar{F}(\lambda_i)\bar{\lambda}_i}{\lambda_j-\bar{\lambda}_i}.
\end{equation}
Furthermore, the solution to (\ref{eq:1D}) is close to its Galerkin projection 
\[u(\lambda)\approx \tu(\lambda)={V}c(\lambda)={V} (S+\lambda M)^{-1}b \]
where  ${V}$  represents the row vector of basis functions $u_i$,
$${ V} = [ u_1 ,\ldots , u_m ] .$$   The following proposition says that for $S$ and $M$ given above the ROM transfer function matches the data. Its proof follows immediately from the fact that the exact solutions are in the trial space and the fact the $F$ commutes with the complex conjugate, that is,  $\bar{\tilde{F}}(\lambda)= \tilde{F}(\bar{\lambda})$.
\begin{proposition}\label{prop:1}
Assume that $\Im \lambda_j\ne 0$ for $j=1,\ldots,m$. 
Then the Galerkin projection of the solution of (\ref{eq:1D})
  \[\tu(\lambda)=Vc(\lambda)=V(S+\lambda M)^{-1}b\] 
is exact at $\lambda=\lambda_j$ ,
$$ \tu(\lambda_j) = u(\lambda_j),$$
and hence $$ \tF(\lambda_j)=b^*(S+\lambda_j M)^{-1}b=F(\lambda_j)$$ and $$  \tF(\bar\lambda_j)=b^*(S+\bar\lambda_j M)^{-1}b=F(\bar\lambda_j) $$
for $j=1,\ldots, m$.
\end{proposition}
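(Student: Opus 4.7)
The plan is to exploit the fact that the basis used for the Galerkin projection is made up of exact solutions at the interpolation frequencies. Since $V=(u_1,\dots,u_m)$ with $u_j=u(\cdot,\lambda_j)$, the true solution at $\lambda_j$ is simply $u(\lambda_j)=Ve_j$, where $e_j$ is the $j$th canonical basis vector. Thus showing $\tu(\lambda_j)=u(\lambda_j)$ reduces to proving that $c(\lambda_j)=(S+\lambda_j M)^{-1}b=e_j$, or equivalently that
\[
(S+\lambda_j M)e_j = b.
\]

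First I would read off this identity directly from the matching conditions. By (\ref{eq:loew2}), we have $S_{ij}+\lambda_j M_{ij}=b_i$ for every row index $i$, which is precisely the statement that the $j$th column of $S+\lambda_j M$ equals $b$. This gives $(S+\lambda_j M)e_j=b$, and since $S+\lambda_j M$ is invertible (its Hermitian positive definite part plus a nonreal shift, using $\Im\lambda_j\neq 0$), we conclude $c(\lambda_j)=e_j$ and therefore $\tu(\lambda_j)=Ve_j=u_j=u(\lambda_j)$.

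Next, interpolation of the transfer function at $\lambda_j$ follows by pairing $c(\lambda_j)=e_j$ with $b$ on the left:
\[
\tF(\lambda_j)=b^*(S+\lambda_j M)^{-1}b = b^*e_j = \overline{b_j}.
\]
By the definition of $b_j=\langle u_j,g\rangle=\bar F(\lambda_j)$, this gives $\tF(\lambda_j)=F(\lambda_j)$ for $j=1,\dots,m$. To extend interpolation to the conjugate nodes $\bar\lambda_j$, I would invoke the symmetry noted just before the proposition, namely $\overline{\tF(\lambda)}=\tF(\bar\lambda)$, which is a direct consequence of $b$, $S$, $M$ being constructed so that the ROM inherits the realness of $g$ and $q$. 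Combining this with $\tF(\lambda_j)=F(\lambda_j)$ and the analogous conjugate symmetry $\overline{F(\lambda)}=F(\bar\lambda)$ of the true transfer function yields $\tF(\bar\lambda_j)=F(\bar\lambda_j)$.

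There is no real obstacle here: the whole proof is a one-line column-identification using the Loewner-type matching condition (\ref{eq:loew2}). The only point that deserves a brief sentence is why $S+\lambda_j M$ is invertible at $\lambda_j$ with $\Im\lambda_j\neq 0$; this follows from Hermiticity of $S$ and positive definiteness of $M$, since then $\Im(x^*(S+\lambda_j M)x)=\Im(\lambda_j)\,x^*Mx\neq 0$ for $x\neq 0$.
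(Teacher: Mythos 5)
Your proof is correct and is essentially the argument the paper intends (the paper only asserts the proposition "follows immediately" from the matching condition (\ref{eq:loew2}) and the symmetry $\bar{\tilde F}(\lambda)=\tilde F(\bar\lambda)$): identifying the $j$th column of $S+\lambda_j M$ with $b$ gives $c(\lambda_j)=e_j$, hence exactness of $\tu$ at $\lambda_j$, and the conjugate nodes follow from realness of $g$ and $q$. Your added remark on the invertibility of $S+\lambda_j M$ via $\Im\bigl(x^*(S+\lambda_j M)x\bigr)=\Im(\lambda_j)\,x^*Mx\neq 0$ is a worthwhile detail the paper leaves implicit.
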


The corresponding proposition for $\lambda_j$ real was shown in \cite{BoDrMaMoZa}.

\subsection{Lanczos tridiagonalization}
In the next step, we orthogonalize the above basis of exact solutions by using the Lanczos algorithm. More precisely, 	we run $m$ steps of the { $M$-Hermitian}  Lanczos algorithm corresponding to  matrix $A=M^{-1}S$ and initial vector  $M^{-1}b$. This yields tridiagonal matrix $T\in\RR^{m\times m}$ and $M$-orthonormal Lanczos vectors $q_i\in \CC^m$, such that
	\be\label{eq:lancz} AQ =Q T , \qquad Q^*MQ=I,\ee
where $$Q=[q_1, q_2, \ldots, q_m]\in{\CC^{m\times m}},$$ and $$q_1=M^{-1}b/\sqrt{b^*M^{-1}b}.$$ This resulting new basis of continuous functions will be orthonormal with respect to the continuous $L^2(0,L)$ inner product and are given by the vector $${VQ=  [ \sum_{j=1}^mq_{j1}u_j ,\ldots , \sum_{j=1}^m q_{jm}u_j ]}  .$$   The Galerkin projection of (\ref{eq:1D}) and its transfer function can then be written in Lanczos coordinates as
	\be \label{eq:state} \tu(\lambda)=\sqrt{b^*M^{-1}b}V Q(T+\lambda I)^{-1}e_1, \ee \be \tF(\lambda)= (b^*M^{-1}b)  e_1^*(T+\lambda I)^{-1}e_1\ee
where $e_1 = (1,0,\ldots,0)^T $ is the first coordinate column vector in $\RR^m$. 
It is known, see for example \cite{St}, that when $\Sigma\in\CC\setminus \RR_-$ is compact, for any sequence  $\lambda_j\in \Sigma$, $j=1,\ldots,m$  we have that for any $\lambda\in\Sigma$ the corresponding Galerkin solution converges exponentially $\tu\rightarrow u$ as $m\rightarrow\infty$ with a uniform linear rate. 

\subsection{Internal solutions}
We assume that we do not know $p$,  yet we want to compute $u(x,\lambda)$ directly from the data. Recall that the output of the Lanczos algorithm, tridiagonal $T$ and change of basis $Q$, were obtained from data only. However, we do not know the original basis of exact solutions $V$.  We propose to approximate $u$ internally, as was done in \cite{BoDrMaMoZa}, by replacing the unknown orthogonalized internal solutions $VQ$ with orthogonalized background solutions $V_0Q_0$ corresponding to background $p_0=0$. Here $V_0$ is the row vector containing the basis of background solutions 
$${ V_0} = [ u^0_1,\ldots , u^0_m ] $$ 
to (\ref{eq:1D}) corresponding to $p=p_0=0$ and the same spectral points $\lambda= \lambda_1, \ldots \lambda_m$, and  $Q_0$ is computed from Lanczos orthogonalization of the background ROM.  That is, one can compute an approximation to $u(x,\lambda)$ using
$$ u \approx \sqrt{b^*M^{-1}b}V_0 Q_0(T+\lambda I)^{-1}e_1 ,$$
which is obtained from data only. 
\begin{conjecture}\label{conj:1} Assume that $V_0$ and $Q_0$, are the solution basis and Lanczos matrix  corresponding to the background $p_0=0$. 
Then for all  $\lambda\in\Sigma$ we have that 
\be\label{eq:intern} u=\lim_{m\to\infty}\uu= \sqrt{b^*M^{-1}b}V_0 Q_0(T+\lambda I)^{-1}e_1.
	\ee
that is, for any fixed $\lambda\in \Sigma$, as the number of data points in $\Sigma$ approaches infinity, the data generated internal solution $\uu$ converges to the true solution $u$. 
\end{conjecture}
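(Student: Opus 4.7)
The plan is to reduce the statement to a basis-convergence claim. By the exponential convergence of the Galerkin projection stated at the end of Section 2.3, the true Galerkin approximation
\[
\tu(\lambda) \;=\; \sqrt{b^{*}M^{-1}b}\; VQ(T+\lambda I)^{-1}e_{1}
\]
converges uniformly on $\Sigma$ to $u(\lambda)$ as $m\to\infty$. So Conjecture \ref{conj:1} is equivalent to showing that
\[
\lim_{m\to\infty}\Bigl\|\sqrt{b^{*}M^{-1}b}\,(VQ - V_{0}Q_{0})(T+\lambda I)^{-1}e_{1}\Bigr\|_{L^{2}(0,L)} \;=\; 0
\]
uniformly in $\lambda\in\Sigma$. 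This reformulation is attractive because $T$ (the data-driven object) is the same in both expressions; only the embedding of the ROM state into $L^{2}(0,L)$ differs.

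Both $\Psi := VQ$ and $\Psi_{0} := V_{0}Q_{0}$ are row vectors of $L^{2}(0,L)$-orthonormal functions: the identity $Q^{*}MQ=I$ encodes exactly $L^{2}$-orthonormality of $\Psi$, and similarly for $\Psi_{0}$. The coefficient vector $c(\lambda) := \sqrt{b^{*}M^{-1}b}\,(T+\lambda I)^{-1}e_{1}$ is uniformly bounded in $\ell^{2}$ over $\Sigma$, since $T$ is symmetric positive semidefinite (corresponding to the positive operator $A_{q}$), $\Sigma\subset\CC\setminus\RR_{-}$ stays away from the spectrum, and $\|g\|_{L^{2}}$ is fixed. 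Consequently, the displayed quantity is dominated by the operator norm of $\Psi-\Psi_{0}:\ell^{2}\to L^{2}(0,L)$ applied to a bounded sequence, and the whole question reduces to showing that $\Psi$ and $\Psi_{0}$ asymptotically agree as operators.

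This is precisely the local-independence property (\ref{eq:localind}) invoked throughout the paper. The route I would take is spectral. The Lanczos process applied to $A=M^{-1}S$ with starting vector $M^{-1}b$ produces orthonormal polynomials with respect to the spectral measure of the pair $(A_{q},g)$, and the basis functions $\Psi e_{j}$ are the images of these polynomials under a canonical Gelfand--Levitan--Krein-type transformation associated to $A_{q}$. As $m\to\infty$ the spectral data of $(A_{q},g)$ and of $(A_{0},g)$ are both recovered in full, and the optimal-grid interpretation of \cite{druskin1999gaussian,BoDr} shows that each $\Psi e_{j}$ and $\Psi_{0} e_{j}$ becomes increasingly well-localized around a common data-determined node $x_{j}\in(0,L)$ and is supported on a cell of vanishing width. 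Thus one expects $\|\Psi e_{j}-\Psi_{0}e_{j}\|_{L^{2}}\to 0$ on average, and any $\ell^{2}$-bounded superposition of these differences to tend to zero in $L^{2}$.

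The hard part is to make the last paragraph rigorous and uniform in $\lambda\in\Sigma$. The bases $\Psi$, $\Psi_{0}$ are defined only implicitly, through three-term recurrences whose coefficients are data-dependent, and the nodes $x_{j}$ of the data-driven grid themselves depend on $q$. Quantifying how closely the two grids match, and then controlling the $L^{2}$ difference of the corresponding localized functions, appears to require a Marchenko-type transmutation kernel intertwining $V$ and $V_{0}$, together with a proof that this kernel is asymptotically trivial on the Lanczos-generated subspaces. This is the step I expect to be the genuinely substantive work; indeed, the paper attributes the rigorous justification of (\ref{eq:localind}) to the ongoing \cite{BorceaZimmerling}, and Conjecture \ref{conj:1} essentially asks for the same control at the level of the full internal solution.
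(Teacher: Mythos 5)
The statement you are asked to prove is labelled a \emph{conjecture} in the paper, and the paper supplies no proof of it: the authors explicitly state that a rigorous analysis of the underlying property (\ref{eq:localind}) ``is in progress at the moment'' and defer it to \cite{BorceaZimmerling}. So there is no paper proof to compare against, and your proposal should be judged on whether it actually closes the statement. It does not, and you say so yourself. Your reduction is sound as far as it goes: splitting $\uu-u=(\uu-\tu)+(\tu-u)$, disposing of the second term by the exponential Galerkin convergence quoted at the end of Section~2.3, observing that $Q^*MQ=I$ makes $VQ$ and $V_0Q_0$ row vectors of $L^2$-orthonormal functions, and bounding the coefficient vector $\sqrt{b^*M^{-1}b}\,(T+\lambda I)^{-1}e_1$ uniformly on a compact $\Sigma\subset\CC\setminus\RR_-$ (this last step tacitly needs $T$ positive semidefinite, i.e.\ effectively $q\ge 0$ or at least $A_q>0$, which the paper never states but clearly intends). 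This correctly isolates the entire difficulty in the single claim that $VQ-V_0Q_0$ tends to zero, applied to the relevant bounded coefficient vectors, as $m\to\infty$.

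But that isolated claim \emph{is} the conjecture. It is exactly the assertion (\ref{eq:localind}) that the paper takes as an empirically observed, unproven property, and your final paragraph candidly concedes that establishing it --- via a Marchenko/Gelfand--Levitan transmutation argument, control of the data-driven grid nodes, and uniformity in $\lambda$ --- is ``the genuinely substantive work'' that you do not carry out. So the proposal is an honest and correctly executed reformulation, consistent with the heuristic justification the paper itself offers, but it is not a proof: the gap is the theorem. One further caution if you pursue the spectral route you sketch: reducing to the \emph{operator norm} of $\Psi-\Psi_0:\ell^2\to L^2$ is stronger than necessary and may well fail (the two Lanczos bases need not converge to each other columnwise in any uniform sense); what is actually needed is convergence of $(\Psi-\Psi_0)c_m(\lambda)$ for the specific resolvent-generated coefficient vectors, which is a weaker and more plausible statement, and is the form in which \cite{BorceaZimmerling} would presumably have to be invoked.
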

	
\subsection{Nonlinear inverse problem} We now use the Lippmann-Schwinger formulation for the solutions $u$ to solve the nonlinear inverse problem by using the internal solutions described above. 
From  (\ref{eq:transfer})  we obtain
\begin{eqnarray}F_0(\lambda_j)-F(\lambda_j)=\langle g,\left(-\frac{d^2}{dx^2} +\lambda I\right)^{-1}g\rangle-\langle g,\left(-\frac{d^2}{dx^2} +{p}I+\lambda I\right)^{-1}g\rangle \nonumber \\=\langle \left(-\frac{d^2}{dx^2} +{p}I+\bar{\lambda} I\right)^{-1}g, p\left(-\frac{d^2}{dx^2} +\lambda I\right)^{-1}g\rangle 
\label{eq:operator}\end{eqnarray}
that is,
	\be  \label{eq:int}F_0(\lambda_j)-F(\lambda_j)=\int \bar{u}_0(x,\lambda_j ) u(x,\lambda_j )p(x)dx, \\
	\qquad j=1,\ldots, m \ee
which can also be seen as a direct consequence of the usual Lippmann-Schwinger formulation with the Green's function.  Here $u_0$ corresponds to the solution to the background problem with $p_0=0$.
If all $\lambda_j$ are complex, then (\ref{eq:int}) yields $2m$ real equations for $p$, which is the same as the number of data points.
For real $\lambda_j\in \RR$ we also have $2m$ real equations 
\begin{eqnarray}\label{eq:intd}F_0(\lambda_j) -F(\lambda_j) =\langle u_0,{p}u\rangle=\int \bar{u}_0(x,\lambda_j ) u(x,\lambda_j )p(x)dx, \\  \frac{d}{d\lambda}(F_0-F)|_{\lambda= \lambda_j}=\int \frac{d}{d\lambda}[\bar{u}_0(x,\lambda )u(x,\lambda_j )]_{\lambda=\lambda_j} p(x)d  x\nonumber \end{eqnarray}
for $j=1,\ldots,m$. Of course, the internal solutions $u(x,\lambda_j )$ and their derivatives with respect to $\lambda$ are unknown, and they depend on $p$, so the system (\ref{eq:int}-\ref{eq:intd}) is nonlinear with respect to $p$. A Born linearization replaces $u_0$ with $u$, however, it is only accurate for small $p$.
Using Conjecture~\ref{conj:1}, we replace $u(x,\lambda )$  in (\ref{eq:int}-\ref{eq:intd}) with its approximation $$ u \approx \uu=\sqrt{b^*M^{-1}b}V_0 Q_0 (T+\lambda I)^{-1}e_1.$$ 
For example, for the case of complex $\lambda_j$ we can write the new system for $p$ as
\be\label{eq:oper}
\delta F= \int W(x) p(x) dx
\ee
where $$\delta F=[F_0(\lambda_1)-F(\lambda_1),\ldots, F_0(\lambda_m)-F(\lambda_m)]\in\CC^{m},$$  and     $$W=[\uu(x,\lambda_1)\overline{u}_0(x,\lambda_1),\ldots,
\uu(x,\lambda_m) \overline{u}_0(x,\lambda_m)]$$ 
is an $m$-dimensional vector of complex valued functions on $(0,L)$.  By construction, $\uu$  can be directly computed from the data without knowing $p $, by using (\ref{eq:intern}), thus making nonlinear system (\ref{eq:oper})  linear! A reasonable regularization of (\ref{eq:oper}) is to restrict $p$ to the dominant left singular vectors of  $W$. We will refer to (\ref{eq:oper}) as a Lippmann-Schwinger-Lanczos system.

\section{Multidimensional MIMO problem}
\subsection{Description of the MIMO problem}
We consider the boundary value problem on $\Omega\in\RR^d$ for
\be\label{d-Schrod}-\Delta u^{(r)} +p u^{(r)} +\lambda u=g^{(r)}, \quad  \frac{d u}{d\nu}\large|_{\partial \Omega}=0, \  r=1,\ldots, K,\ee
where $g^{(r)}$ are localized  sources, e.g., boundary charge distributions,  supported near or at an accessible part $S$ of $\partial \Omega$.
Let $$G=[g^{(1)}, g^{(2)}, \ldots, g^{(K)}]$$  and $$U=[u^{(1)}, u^{(2)}, \ldots, u^{(K)}]$$
again understood as vectors of continuous source and solution functions respectively. Then the multiple-input multiple output (MIMO) transfer function is a matrix valued function of $\lambda$
	\be\label{eq:transferMIMO}
	F(\lambda)= \langle G,U\rangle =\langle G,\left(-\frac{d^2}{dx^2} +\diag{p}+\lambda I\right)^{-1}G\rangle\in\CC^{K\times K}
	\ee
where $\langle, \rangle$ again represents the continuous Hermitian $L^2(\Omega)$ inner product, that is,  here 
$$\langle G,U\rangle = \int_\Omega G^* U dx, $$ matrix valued in this case.  For real positive $\lambda$ we will have symmetric positive definite $F(\lambda)$. As in the SISO case, we consider the inverse problem with data given by $2m$ real symmetric $K\times K$ matrices, that is,  for $\Im \lambda_j=0$ our data are $$F(\lambda)|_{\lambda=\lambda_j}\in\RR,$$ and  $$\frac{F(\lambda)}{d\lambda}|_{\lambda=\lambda_j}\in \RR,$$
and $F(\lambda)|_{\lambda=\lambda_j}\in\CC$ otherwise.

\subsection{Construction of the MIMO data-driven ROM}
We consider the $mK$ dimensional projection subspace $$\mathbb{U}=\mbox{span}\{U_1(x)=U(x,\lambda_1),\ldots , U_m(x)=U(x,\lambda_m)\}.$$ We then define the MIMO data-driven ROM as \be\label{eq:ROMMIMO}
(S+\lambda M)C(\lambda) = B
\ee
where $S,M\in\CC^{mK\times mK}$ are Hermitian positive definite matrices,  $B\in\RR^{mK \times K}$,  and $C\in\CC^{mK\times K} $ is a matrix valued function of $\lambda$, again corresponding to coefficients of the solution with respect to the above basis of exact solutions. Stiffness and mass matrices can be written as block extensions of their counterparts in (\ref{eq:ROM}), with blocks given by  $$S=(S_{ij}=\langle {U'}_i,U'_j\rangle)+\langle p{U}_i,U_j\rangle)$$ and $$M=(M_{ij}=\langle {U}_i,U_j\rangle).$$  Once again $S$ and $M$ are obtained from imposing the conditions that the ROM transfer function  \[ \tF(\lambda)=B^*C(\lambda) \]
matches the data.

\subsection{Block-Lanczos tridiagonalization}
The next step is to run $m$ steps of the {$M$-Hermitian}  block-Lanczos algorithm with matrix $$A=M^{-1}S$$ and initial block  vector $M^{-1}B$.  From this we obtain the {Hermitian} block-tridiagonal matrix {$$T\in\CC^{Km\times Km}$$} with  $K\times K$ blocks,  and $M$-orthonormal Lanczos block-vectors $q_i\in \CC^{mK\times K}$. From this we obtain the block counterpart of (\ref{eq:lancz}) 
	$$ Q=[q_1, q_2,\ldots, q_m]\in{\CC^{mK\times mK}},$$ where $$ q_1=M^{-1}B(B^*M^{-1}B)^{-1/2}.$$
The state solution, that is, the Galerkin projection of the true solution, can then be written in Lanczos coordinates as
	\be \label{eq:stateB} \tilde{U}(\lambda)=\sqrt{B^*M^{-1}B}V Q(T+\lambda I)^{-1}E_1,\ee
where $E_1\in\RR^{mK\times K}$ consists of the first $K$ columns of identity matrix $I\in\RR^{mK\times mK}$.
{ Using block generalization of the SISO case, we express the orthogonalized basis as 
$$VQ=  [ \sum_{j=1}^m U_jq_{j1} ,\ldots , \sum_{j=1}^m U_jq_{jm}] $$
where $\left\{q_{ji}\in \CC^{K\times K}\right\}^{m}_{i,j=1}$ are the blocks of matrix $Q$ and $U_j=[u_{1j}(x),\ldots u_{Kj}(x)]$ are the corresponding vectors of solutions.} Note that this basis depends on the unknown internal solutions.
\subsection{Internal solutions}
As in the MIMO case, we replace the basis $VQ$ in (\ref{eq:stateB}) with its background counterpart $V_0Q_0$ to obtain an internal derived entirely from data. 
The SISO result is generalizable to MIMO case, and can be vaguely formulated as following conjecture
\begin{conjecture}\label{conj:1MIMO}
	Assume we have  sequence of  input sets $$G_k=[g_k^{(1)},g_k^{(2)}, \ldots, g_k^{(K)}],$$  such that as $k\rightarrow \infty$ the range of $G_k$ approximates surface distributions at the observable boundary  $S\subset\partial \Omega$.  Then for all $ \lambda\in\Sigma$
	\be\label{eq:internMIMO} U(x,\lambda) =\lim_{m,K\to\infty}\UU := \sqrt{B^*M^{-1}B}V_0 Q_0(T+\lambda I)^{-1}E_1.
	\ee
\end{conjecture}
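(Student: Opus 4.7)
The plan is to parallel the SISO argument by splitting
$$\UU - U = \bigl(\UU - \widetilde{U}\bigr) + \bigl(\widetilde{U} - U\bigr),$$
where $\widetilde{U} = \sqrt{B^{*} M^{-1} B}\, V Q (T + \lambda I)^{-1} E_{1}$ is the block Galerkin projection written in the \emph{unknown} true basis $V Q$. The first term quantifies the error of replacing $V Q$ with the data-accessible background basis $V_{0} Q_{0}$, while the second is the usual projection error. The conjecture then reduces to showing that both terms vanish in the prescribed joint limit.

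For the projection error $\widetilde{U} - U$, I would extend the SISO convergence result stated just above Conjecture~\ref{conj:1} to block form. Interpreting $\widetilde{U}(\lambda)$ as a matrix-valued rational interpolant of the block resolvent $(A_{q} + \lambda I)^{-1} G$ at $\{\lambda_{j}, \bar\lambda_{j}\}_{j=1}^{m}$, uniform exponential convergence on compact $\Sigma \subset \mathbb{C}\setminus \mathbb{R}_{-}$ should follow from the standard theory of rational approximation of Cauchy-Stieltjes transforms, adapted to block Lanczos by routine perturbation arguments. I expect this step to be essentially mechanical once the right function-theoretic statement is lined up.

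The hard part is controlling $V_{0} Q_{0} - V Q$. The underlying mechanism is spatial localization: the block-tridiagonal form of $T$ forces each block column of $V Q$ to concentrate in a near-boundary layer whose depth grows with the block index, and to depend only weakly on $q$ inside that layer, which is precisely the property (\ref{eq:localind}) invoked in the introduction. In one dimension this is the content of Marchenko-Gelfand-Levitan inverse spectral theory, and the work \cite{BorceaZimmerling} is cited in the introduction as providing its rigorous basis. A genuine multidimensional proof would require transporting those layer-stripping estimates across $\partial \Omega$; at present only heuristic and numerical evidence is available, which is precisely why the statement is offered as a conjecture. My concrete plan is to isolate, as a key lemma, a quantitative bound $\|V_{0} Q_{0} - V Q\| \to 0$ as $m \to \infty$ in a norm strong enough to survive composition with $(T + \lambda I)^{-1} E_{1}$, and then to use it as a black box.

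The role of the assumption that $\operatorname{range}(G_{k})$ approximates surface distributions on $S$ is to ensure, as $K \to \infty$, that the block Krylov subspace generated by $M^{-1} B$ under $M^{-1} S$ captures enough near-boundary information for the localized columns of $V_{0} Q_{0}$ to densely resolve all true solutions whose sources are supported on $S$. Combined with the $m \to \infty$ resolvent-approximation estimate above, this would yield the claimed joint limit in (\ref{eq:internMIMO}).
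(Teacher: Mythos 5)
The statement you are addressing is explicitly labeled a \emph{conjecture}, and the paper supplies no proof of it --- only the remark that ``the SISO result is generalizable to the MIMO case'' and a ``vague'' formulation. So there is no argument in the paper to compare yours against; the relevant question is whether your proposal actually closes the statement, and it does not. Your decomposition of $\UU-U$ into a Galerkin projection error and a basis-replacement error is the right way to organize the problem, and it faithfully mirrors the heuristics the paper itself invokes: the projection error is handled by the exponential convergence of rational (block) Krylov approximants on compact $\Sigma\subset\CC\setminus\RR_-$, and the basis-replacement error is exactly the property (\ref{eq:localind}), $V_q\approx V_0$. But the entire content of the conjecture lives in the step you defer to a black box: a quantitative bound on $V_0Q_0-VQ$ that vanishes as $m\to\infty$ in a norm compatible with composition against $(T+\lambda I)^{-1}E_1$. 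The paper states that even the one-dimensional version of this localization property is only now being rigorously analyzed (the cited work in progress \cite{BorceaZimmerling}), and that in the multidimensional setting the evidence is purely numerical. Assuming that lemma is therefore not a reduction to known results; it is assuming the conjecture's essential difficulty.

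Two further points deserve care if you pursue this. First, the two errors in your splitting are not independent of $m$ in a benign way: as $m$ grows the projection error shrinks, but the number of Lanczos vectors whose deviation from the background basis must be controlled grows, and $(T+\lambda I)^{-1}$ can amplify late-index discrepancies; a uniform-in-$m$ statement is needed, not just columnwise closeness. Second, the hypothesis on $\mathrm{range}(G_k)$ densifying surface distributions on $S$ enters the joint limit $m,K\to\infty$ in a way that your outline only gestures at --- one must show that completeness of the sources on $S$ yields completeness of the block Krylov subspaces in the region to be imaged, which is a controllability-type statement that is itself nontrivial in dimension $d>1$. In short, your proposal is a reasonable roadmap consistent with the paper's own (unproven) reasoning, but it is not a proof, and the step you isolate as a lemma is precisely the open problem.
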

	
\subsection{Nonlinear MIMO inverse problem}
From  (\ref{eq:transfer}) we obtain
\begin{multline} F_0(\lambda_j)-F(\lambda_j)=\langle G,\left(-\Delta +\lambda I\right)^{-1}G\rangle-\langle G,\left(-\Delta +pI+\lambda I\right)^{-1}G\rangle  \\ =\langle \left(-\Delta+{p}I+\lambda I\right)^{-1}G, p\left(-\Delta +\lambda I\right)^{-1}G\rangle \\ =\int U^*_0(x,\lambda_j ) p(x)U(x,\lambda_j )dx, \nonumber\end{multline}
that is,  \be\label{eq:intMIMO}F_0(\lambda_j)-F(\lambda_j)=\int U^*_0(x,\lambda_j ) p(x)U(x,\lambda_j )dx, \qquad j=1,\ldots, m.\ee
Here again the subscript $0$ corresponds to background solutions with $p=0$.
If all $\lambda_j$ are complex, then (\ref{eq:int}) gives $2m$ real equations for $p$, equal to the number of data points. 
For $\lambda_j\in \RR$ we will have
\begin{eqnarray} \label{eq:intdMIMO}(F_0-F)|_{\lambda_j} &=& \int U^*_0(x,\lambda_j) p(x) {U}(x,\lambda_j )dx, \nonumber \\  \frac{d}{d\lambda}(F_0-F)|_{\lambda_j}  &=& \int \frac{d}{d\lambda}(U^*_0(x,\lambda) {U}(x,\lambda))|_{\lambda=\lambda_j} p(x)d x.\end{eqnarray}
Similar to the SISO case, 
by using Conjecture~\ref{conj:1MIMO} we replace $U(x,\lambda )$ with its approximation $\UU$ in (\ref{eq:intMIMO}) and (\ref{eq:intdMIMO}). 
Again, like in the SISO case, precomputing $\UU$ via the data-driven algorithm (\ref{eq:internMIMO}) will yield the  linear system for $p$:
\begin{eqnarray} \label{eq:intdMIMO2}(F_0-F)|_{\lambda_j} &=& \int U^*_0(x,\lambda_j ) p(x) \mathbf{U}(x,\lambda_j )dx, \nonumber \\  \frac{d}{d\lambda}(F_0-F)|_{\lambda_j}  &=& \int \frac{d}{d\lambda}(U^*_0(x,\lambda )\mathbf{U}(x,\lambda))|_{\lambda=\lambda_j} p(x)d x.\end{eqnarray}

\section{Numerical Experiments}

In this section we present numerical results for reconstructing a 2D two-bump media. For simplicity, we considered noiseless case only. In the presence of noise, constructing the data-driven ROM may become unstable and requires regularization. This problem was resolved in \cite{borcea2019robust} by optimal SVD-based truncation of the unstable part.  However, to avoid additional complications, we skipped that part in this work. {In our experiments we chose such frequencies that provide enough sensitivity for both near and far zones.} In the first experiment we considered two low contrast bump inclusions (see Fig.\ref{fig:num1}, top left). The measurement setup mimics one from medical imaging, i.e. we had $K=8$ sources located on the boundary, two on each side (see red crosses at Fig.\ref{fig:num1}, top left). {We used $m=5$ positive spectral values $\lambda=1,2,14,50,128$. The forward problem was discretized on a regular triangular grid using finite-element method with $N=151\times 151$ elements. We approximated (\ref{eq:intdMIMO2}) using nodal quadrature on a $301\times 301$ grid. Here and below, to compute discrete $p$, the obtained ill-conditioned system was solved by projecting it onto its dominant eigenvectors.} On the top right of Fig.\ref{fig:num1} we plotted the reconstruction when actual true internal solution ${U}(x,\lambda))$ is available. {We call  this scenario 'Cheated IE',  and use it to emulate the  best possible outcome of hypothetical  iterative (aka distorted  Lippmann-Schwinger ) realizations,  when at the terminal stage the true $p$ is approximately known.} The usual Born linearization result (when we replace ${U}(x,\lambda))$ in (\ref{eq:intdMIMO}) with ${U}_0(x,\lambda))$) is shown on bottom left of Fig.\ref{fig:num1}. Finally, the reconstruction using our approach is plotted on the bottom right of Fig.\ref{fig:num1}. As one can observe, the Born approximation results in multiple artifacts and produces a significantly worse reconstruction compared to our approach and 'Cheated IE'.

In our second experiment we considered the same two bumps but with increased contrast (see Fig.\ref{fig:num2},  top left). The source locations were the same as in the previous experiment, however, we we added one negative frequency: $\lambda=-24,1,2,14,50,128$, so $m=6$. Forward problem and (\ref{eq:intdMIMO2}) discretizations were the same as in the previous example. Similar to our first experiments we plotted  the results obtained using the 'Cheated IE', the Born approximation and our approach (see on the top right, bottom left and bottom right of Fig.\ref{fig:num2}, respectively). Because of higher contrast, the Born linearization failed to produce a meaningful reconstruction. However, both the 'Cheated IE' and our approach still performed well. 

\begin{figure}[htb]
\centering
\includegraphics[scale=0.6]{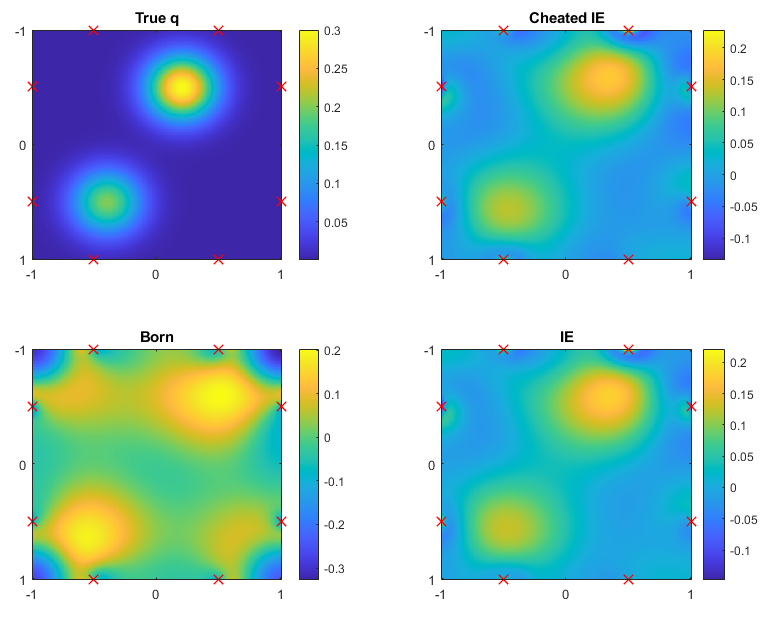} 
\caption{Experiment 1: True medium (top left) and its reconstructions using 'Cheated IE' (top right), Born linearization (bottom left) and our approach (bottom right)}
\label{fig:num1}
\end{figure}

%
%In our second experiment we considered the same two bumps but with increased contrast (see Fig.\ref{fig:num2},  top left). The sources locations were the same, however out of $s=6$ spectral values we took one negative and 5 positive. Similar to our first experiments we plotted results obtained using 'Cheated IE', Born approximation and our approach (see on the top right, bottom left and bottom right of Fig.\ref{fig:num2}, respectively). Because of higher contrast, Born linearization failed to produce meaningful reconstruction, however both 'Cheated IE' and our approach still performed well. 
%
\begin{figure}[htb]
\centering
\includegraphics[scale=0.6]{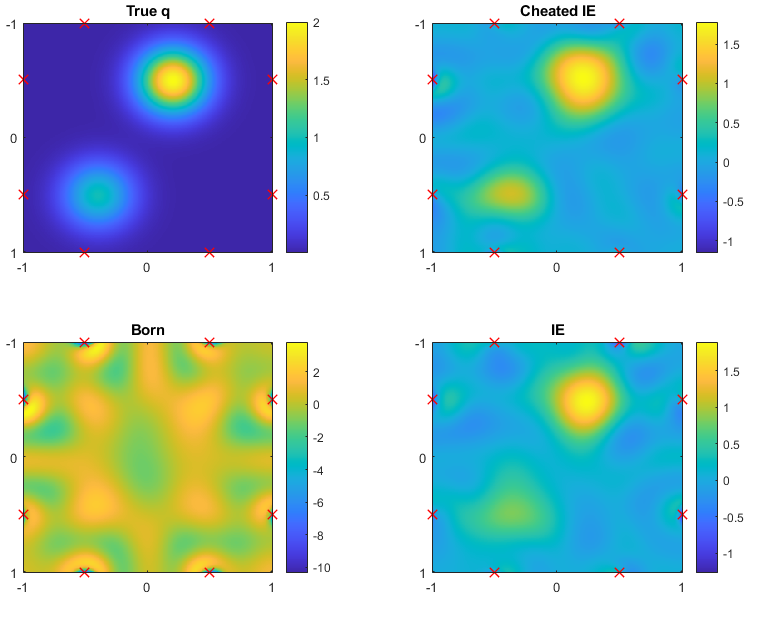} 
\caption{Experiment 2: True medium (top left) and its reconstructions using 'Cheated IE' (top right), Born linearization (bottom left) and our approach (bottom right)}
\label{fig:num2}
\end{figure}

In our third experiment we again considered a medium with 2 bumps with increased contrast (see Fig.\ref{fig:num3},  top left), however, with data acquisition similar to surface geophysics or radars.  That is, we probed the medium with $K=12$ sources located on one (upper) side $y=-1$ only (see red crosses at Fig.\ref{fig:num3}, top left). For better aperture, the lateral extent of the acquisition boundary was three times larger 
than the depth of the domain. {In this example we used the same $m=5$ positive frequencies as in the first experiment: $\lambda=-24,1,2,14,50,128$. The number of finite elements was $N=451\times 151$, and (\ref{eq:intdMIMO2}) was approximated using nodal quadrature on a $901\times 301$ grid.} We compared the results of the 'Cheated IE', the Born approximation  and our approach (see on the top right, bottom left and bottom right of Fig.\ref{fig:num3}, respectively). Similar to the previous 
example, the reconstruction via the Born approximant is totally unsatisfactory because of high contrast of the inclusions. At the same time, our approach, along with 'Cheated IE', performs well. 

\begin{figure}[htb]
\centering
\includegraphics[scale=0.6]{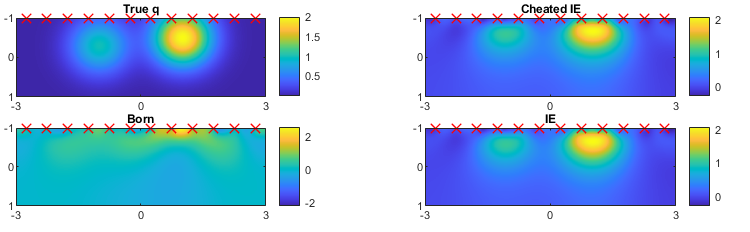} 
\caption{Experiment 3: True medium (top left) and its reconstructions using 'Cheated IE' (top right), Born linearization (bottom left) and our approach (bottom right)}
\label{fig:num3}
\end{figure}
%Finally, in our fourth experiments we considered the same setup as in the previous one but reduced the number of sources/receivers to $K=2$. This is the minimum to obtain meaningful medium reconstruction, though, as expected, the results are significantly worse than for $K=12$. However, as one can observe, even for this example our approach and 'Cheated IE' produced better image than Born (see Fig.\ref{fig:num4}).
%\begin{figure}[htb]
%\centering
%\includegraphics[scale=0.6]{0negfr_ar3to1_2src_2rcv_lr_cl_obj_2} 
%\caption{Experiment 4: True medium (top left) and its reconstructions using 'Cheated IE' (top right), Born linearization (bottom left) and our approach (bottom right)}
%\label{fig:num4}
%\end{figure}
\section{Conclusion and discussion}
{We developed a direct algorithm to solve inverse scattering problems that we call the  Lippmann-Schwinger-Lanczos (LSL) algorithm.  The main novelty of this algorithm is plugging an approximate internal solution computed via the data-driven Lanczos algorithm into the Lippmann-Schwinger system.  This adds some insignificant computational  expense compared to the simplest linearized Born approach, yet demonstrates significant improvement of the inversion quality for the problems where Born underperforms or even fails. Moreover,  in our  numerical experiments the inversion results  are practically indistinguishable from the ones emulating the best possible scenario for hypothetical iterative algorithms. Importantly, the data-driven ROM formulation allows one to obtain qualitatively good images with rather sparse amounts of data.

Indeed, we understand that there certainly can be more complicated situations and/or insufficient data, where our approximate internal solution may not be accurate enough for such a favorable outcome. In this case, however, the LSL can be used in an iterative loop, by updating the background and the internal solutions, with hypothetical   quadratic convergence. We plan to investigate such an approach in our future work.

Another direction for future research should include the extension of the LSL algorithm to first order Schrodinger  formulations, which are generally preferable for problems with sharp discontinuities \cite{borcea2020reduced}. The first order formulation gives a clear generalization path to elasticity and Maxwell's systems  \cite{borcea2019robust}, that we plan to investigate. Another direction of research will be application to the LSL of the ROM regularization from \cite{borcea2019robust}.

We also sketched in the appendix theoretical connections of the LSL algorithm to recent ROM based algorithms such as nonlinear back-projection \cite{druskin2018nonlinear} and ROM-based optimization \cite{borcea2020reduced}, which will hopefully give some new insight into these approaches.}
\appendix
\section{Generalizations and extensions}
In this appendix we show how the system (\ref{eq:oper}) is connected to other algorithms for extracting the unknown $p$ from the ROM, and describe how it lends itself naturally to extensions to other data sets. 
\subsection{Connection  to the back-projection algorithm}
The back-projection algorithm is based on the idea that the perturbed and background operators differ only in the lower order term, so that their difference should approximate $p$. Using the reduced models and replacing $VQ$ with $V_0Q_0$, the difference $$A_p-A_0= pI $$ is approximated by the operator $\mathcal{P}$
\be \label{eq:backprim2}
A_p -A_0 \approx \mathcal{P} \ee
where
 \be \label{eq:backprim}
\mathcal{P} w  =V_0 {Q}_0 (T-T_0)\int   (V_0 {Q}_0)^* w .\ee 
 Recall that $V_0Q_0$ is the row vector of orthogonalized background solutions.
  Here $T$ and $T_0$ are obtained via data matching conditions, for example via time-domain matching, or the frequency matching as described above. 
  
     To see how this relates to Lippmann Schwinger, suppose that one uses the {\it operator} approximation (\ref{eq:backprim}) for $p$ and replaces the right hand side of  (\ref{eq:oper}) with $\langle u_0, \mathcal{P}\uu \rangle$. We get that  
 \begin{multline} \langle u_0, \mathcal{P}\uu \rangle = \int (V_0Q_0)(x)  (T-T_0)\int (V_0Q_0 )^*(x') \uu(x') dx' \bar{u}_0 (x)  dx
\\ =  (b_0^*M_0^{-1}b_0 )^{1/2}
( b^*M^{-1}b )^{1/2} \int\int
e_1^* (T_0+\lambda_j I)^{-1}  \cdot \\ \cdot (V_0Q_0)^*(x){(V_0Q_0)}(x)(T-T_0)(V_0Q_0)^*(x') (V_0Q_0) (x') (T+\lambda_j I)^{-1}e_1dx'dx \\
=  (b_0^*M_0^{-1}b_0 )^{1/2}
( b^*M^{-1}b )^{1/2} e_1^*(T_0+\lambda_j I)^{-1}(T-T_0)(T+\lambda_j I)^{-1}e_1\label{rhsplugin}\end{multline} 
from orthogonality of the basis. At the same time, the left hand side  of (\ref{eq:oper}) is \begin{eqnarray} F_0(\lambda_j)-F(\lambda_j) &=& g^T(\Delta+\lambda_j I)^{-1}g-g^T(\Delta+{p}I+\lambda_j I)^{-1}g \nonumber \\ &=& 
{b_0^*M_0^{-1}b_0} e_1^*(T_0+\lambda_j I)^{-1}e_1-{b^*M^{-1}b} e_1^* (T +\lambda_j I)^{-1}e_1\nonumber\end{eqnarray}
since the ROM matches the data exactly. By definition, $b_0^*M_0^{-1}b_0$ and $b^*M^{-1}b$ are the norms of projections of $g$ on $V_0$ and $V$ respectively, which should become close as the dimension grows, that is, we expect that %Conjecture~\ref{conj:1}  
 \begin{equation} \label{blim} \lim_{m\to\infty} b_0^*M_0^{-1}b_0 =b^*M^{-1}b. \end{equation}  We then obtain 
\begin{eqnarray}\label{eq:ROMint} F_0(\lambda_j)-F(\lambda_j)  & \approx& 
( b^*M^{-1}b ) e_1^*(T_0+\lambda_j I)^{-1}(T-T_0)(T+\lambda_j I)^{-1}e_1 \\
&\approx& \langle u_0, \mathcal{P}\uu \rangle \end{eqnarray}
%\approx F_0(\lambda_j)- F(\lambda_j) \label{eq:identity}\end{multline}
%Suppose now that one puts the back projection approximation for $q$ into the right hand side of (\ref{eq:intd}), we get that 
% \begin{multline}\langle u_0,V_0Q_0(T-T_0)(V_0Q_0)^*u\rangle  \\ =\langle (\Delta+\lambda_j I)^{-1}g, V_0Q_0(T-T_0)(V_0Q_0)^*(\Delta+{q}I+\lambda_j I)^{-1}g \rangle
%\\ \approx (b_0^*M_0^{-1}b_0) \langle 
%V_0Q_0(T_0+\lambda_j I)^{-1}e_1, V_0Q_0(T-T_0)(V_0Q_0)^*V_0Q_0(T+\lambda_j I)^{-1}e_1 \rangle\\
%= (b_0^*M_0^{-1}b_0) e_1^*(T_0+\lambda_j I)^{-1}(T-T_0)(T+\lambda_j I)^{-1}e_1\\ \approx F_0(\lambda_j)- F(\lambda_j) \label{eq:identity}\end{multline}
from  (\ref{rhsplugin}) and (\ref{blim}). Thus we have that, modulo (\ref{blim}), (\ref{eq:backprim}) satisfies the part of system (\ref{eq:oper}) corresponding to $F_0(\lambda_j)-F(\lambda_j)$ for  $j=1,\ldots, m$. Hence the system (\ref{eq:oper}) would be valid for $\lambda_j,\bar\lambda_j$, $j=1,\ldots,m$ if we replace $p$ with its operator approximation (\ref{eq:backprim}). The extension to the derivative data in the real case can be obtained via limiting transition. In summary, the operator $\mathcal{P}$ essentially satisfies the Lippmann-Schwinger-Lanczos equation. 
  
  The back projection algorithm extracts a scalar version of (\ref{eq:backprim}) in a natural way.  Indeed, the simplest back projection reconstruction step is to take 
  	\begin{multline} \label{eq:backprimMark}
  	p(x)= \int p(x')\delta(x-x') d x' \approx \int \mathcal{P} \delta (x-x') d x '  \\ =\int V_0(x') {Q}_0 (T-T_0)  (V_0(x) {Q}_0)^* d x' .\end{multline}
  		It can be show that the probing function (\ref{eq:backprimMark}) uses approximations of $\delta(x-x')$  on the solution snapshot space. Thanks to Conjecture~\ref{conj:1}, this can be defined via projection  
  \[\tilde \delta(x-x')= (V_0(x) {Q}_0) (V_0(x') {Q}_0)^*,\] 
  and is related to  a  known approach of imaging  diagonal operators, e.g., \cite{levinson2016solution}.
   
  The back-projection approach of \cite{druskin2018nonlinear} instead uses a sharper point spread function, obtained by the squaring and re-normalization of the delta function:
 \[ \frac{\tilde \delta(x-x')^2 }{\sqrt{\int  \tilde \delta(x-x')^2 dx'}},\] which yields 
 \be \label{eq:backprim3}
 p(x)\approx \frac{\int \tilde \delta(x-x') \mathcal{P} \tilde  \delta(x-x') dx'}{\int  \tilde \delta(x-x')^2 dx'}  =\frac{V_0(x) {Q}_0 (T-T_0)   (V_0(x) {Q}_0)^*}{\int  \tilde \delta(x-x')^2 dx'}.\ee

\subsection{Linear mapping from ROM to potential}
From (\ref{rhsplugin}), (\ref{eq:ROMint}) and Conjecture~\ref{conj:1} we obtain
\begin{equation}\label{eq:identity2}
s_0(\lambda_j)^*(T-T_0)s(\lambda_j) \approx F_0(\lambda_j)- F(\lambda_j)  \approx
\int \bar{u}_0(x,\lambda_j)\uu(x,\lambda_j)p(x)dx ,
\end{equation}
where $$s_0(\lambda_j)=(b_0^*M_0^{-1}b_0)^{1/2} e_1^*(T_0+\lambda_j I)^{-1}\in\CC^m,$$
  $$s(\lambda_j)=(b^*M^{-1}b)^{1/2} e_1^*(T+\lambda_j I)^{-1}\in\CC^m,$$ and where $\uu$ is given by (\ref{eq:intern}).  Note that $s$ and $s_0$ are data driven. 
\begin{theorem} Assuming that $s,s_0$ and $\uu$ are precomputed from the data, (\ref{eq:identity2})
yields a linear relationship between $p$ and  $(T-T_0)$ that uniquely defines the latter for complex $\lambda_j$, $j=1,\ldots,m$.
For real $\lambda_j$ for uniqueness we need to add derivatives at $\lambda_j$
\end{theorem}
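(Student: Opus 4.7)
The plan is to unwind (\ref{eq:identity2}) into a finite linear system and then verify injectivity using the Krylov/cyclic structure bequeathed by Lanczos. With $s(\lambda_j)$, $s_0(\lambda_j)$, and $\uu(x,\lambda_j)$ precomputed from the data, both sides of (\ref{eq:identity2}) are linear: the left side is a linear functional of $T-T_0$, and the right side of $q$. Since the Lanczos algorithm produces a real symmetric tridiagonal $T$, and likewise $T_0$, the difference $T-T_0$ is real symmetric tridiagonal and therefore lives in a $(2m-1)$-real-dimensional space.

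Next I would count equations. At complex $\lambda_j$ with $\Im\lambda_j\neq 0$, each equation in (\ref{eq:identity2}) is complex-valued; the equation at $\bar\lambda_j$ is obtained from its complex conjugate and hence carries no new information beyond what is already in the data. Thus the $m$ values $F_0(\lambda_j)-F(\lambda_j)$ furnish $2m$ real equations against $2m-1$ unknowns, which is sufficient in principle. For real $\lambda_j$ each equation is real-valued and yields only $m$ real equations; differentiating (\ref{eq:identity2}) in $\lambda$ and evaluating at $\lambda_j$ adds another $m$ real equations, again giving $2m$ in total. As remarked in (\ref{realdata}), these derivative equations are the natural real-axis limit of the complex-$\lambda$ conjugate equations.

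The substantive step is to show injectivity of the linear map
\[
\Phi : X \;\longmapsto\; \bigl(s_0(\lambda_j)^* X\, s(\lambda_j)\bigr)_{j=1}^{m}
\]
on real symmetric tridiagonal matrices. I would expand
\[
s_0(\lambda_j)^* X s(\lambda_j) \;=\; \alpha_0 \alpha \!\!\!\!\sum_{|k-l|\leq 1}\!\!\!\! x_{kl}\, f_k^{(0)}(\bar\lambda_j)\, f_l(\lambda_j),
\]
where $f^{(0)}_k(\lambda) = e_k^T (T_0 + \lambda I)^{-1} e_1$ and $f_l(\lambda) = e_l^T (T + \lambda I)^{-1} e_1$, and $\alpha_0,\alpha$ are the scalar normalizers appearing in the definitions of $s_0,s$. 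Because $e_1$ is cyclic for both $T$ and $T_0$ (a defining feature of Lanczos output from $b$, $b_0$), the families $\{f^{(0)}_k\}_{k=1}^m$ and $\{f_l\}_{l=1}^m$ are systems of rational functions with simple poles at $-\mathrm{spec}(T_0)$ and $-\mathrm{spec}(T)$ respectively, having nonzero residues. A Vandermonde/residue argument on the pole expansions, combined with the three-term recurrence satisfied by the Jacobi-type polynomials $f_k^{(0)}$ and $f_l$, then shows that the $2m \times (2m-1)$ real matrix obtained by separating real and imaginary parts at each $\lambda_j$ has full column rank, provided $\lambda_j \notin -\mathrm{spec}(T) \cup -\mathrm{spec}(T_0)$.

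The hard part will be precisely this Vandermonde-type nondegeneracy: cyclicity makes each $f^{(0)}_k$ and each $f_l$ individually nontrivial, but showing that the $m$ tensor products $f^{(0)}_k(\bar\lambda_j) f_l(\lambda_j)$ sampled at the specific $\lambda_j$ suffice to pin down all $2m-1$ coefficients $x_{kl}$ requires careful exploitation of the Jacobi structure to rule out accidental rank drops at special configurations of $\lambda_j$ relative to the spectra. Once this rank-maximality is established, $X=T-T_0$ is uniquely recovered from the right-hand side of (\ref{eq:identity2}), which gives both the claimed uniqueness for complex $\lambda_j$ and, via the derivative extension, for real $\lambda_j$.
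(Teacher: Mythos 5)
Your reduction to a finite linear system is set up correctly (the count $2m-1$ unknowns versus $2m$ real equations matches the paper's ``$2m$ matching conditions''), but the proof stops exactly where the theorem lives. The entire content of the statement is the injectivity of the map $X \mapsto \bigl(s_0(\lambda_j)^* X s(\lambda_j)\bigr)_{j=1}^m$ on symmetric tridiagonal $X$, and you defer this to an unexecuted ``Vandermonde/residue argument'' while conceding that ruling out ``accidental rank drops at special configurations of $\lambda_j$'' is the hard part. A proof that ends with ``once this rank-maximality is established'' has not established it. If you want to finish along your own lines, the cleaner observation is that $\sum_{|k-l|\le 1} x_{kl} f^{(0)}_k(\lambda) f_l(\lambda)$ has a fixed denominator $\det(T_0+\lambda I)\det(T+\lambda I)$ and a numerator that is a real polynomial of degree at most $2m-2$; since the numerator of $f^{(0)}_k f_l$ has degree exactly $2m-k-l$ (with nonzero leading coefficient because the Lanczos off-diagonal entries are nonzero), the map from $(x_{kl})$ to the numerator coefficients is triangular in the total index $k+l$ and hence injective, and a real polynomial of degree at most $2m-2$ vanishing at $m$ nonreal points (hence also at their conjugates) is zero. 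No Vandermonde nondegeneracy or genericity of the $\lambda_j$ is needed.

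The paper takes a different and much shorter route: by the resolvent identity, $e_1^*(T_0+\lambda I)^{-1}(T-T_0)(T+\lambda I)^{-1}e_1 = e_1^*(T_0+\lambda I)^{-1}e_1 - e_1^*(T+\lambda I)^{-1}e_1$, so the left side of (\ref{eq:identity2}) is, up to normalization, $\tF_0(\lambda_j)-\tF(\lambda_j)$ with $\tF_0$ known; the $2m$ conditions then determine the $[m-1/m]$ rational function $\tF$ by uniqueness of the multipoint Pad\'e interpolant, and $\tF$ determines the Jacobi matrix $T$ (hence $T-T_0$) via the standard Lanczos/moment bijection. That argument only gives uniqueness within the nonlinear family of differences of Jacobi matrices, whereas your linear-map formulation, once completed, gives the stronger statement of injectivity on all symmetric tridiagonal perturbations; but as written your version proves neither.
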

\begin{proof}
The result follows from uniqueness of [m-1/m] Pad\'e from $2m$ matching conditions.
\end{proof}
\subsection{Extension to general ROMs}
The linear mapping approach is not limited to the ROM obtained via frequency matching.
For any reduced order model we can assume the relationship
\be\label{eq:entiredomain}
s_0(\lambda)^*(T-T_0)s(\lambda)
\approx (F_0-F)(\lambda)\approx\\
\int \bar{u}_0(x,\lambda)\uu(x,\lambda)p(x)dx ,
\ee
 for any $ \lambda\in \Sigma \subset \mathbb{C}\setminus \RR_-$, where $\Sigma$ is a compact complex domain.
Then $(T-T_0)$ can be uniquely defined via matching (\ref{eq:entiredomain}) for any $m$ complex $\lambda_j$ in that domain, or $m$ real frequencies and their derivatives.
Furthermore, (\ref{eq:entiredomain}) can be transformed to the time domain where one will obtain convolution equations.

\thanks{{\bf Acknowledgements} 
	{We are grateful Liliana Borcea, Alex Mamonov and J\"orn Zimmerling for productive discussions that inspired this research. }
V. Druskin was partially supported by AFOSR grant FA 955020-1-0079. S. Moskow was partially supported by NSF grants DMS-1715425 and DMS-2008441.  

\bibliography{biblio,biblio6,graphbib6,galerkincitations}

\begin{thebibliography}{10}

\bibitem{doi:10.1137/1.9781611974829.ch7}
Christopher Beattie and Serkan Gugercin.
\newblock {\em Model Reduction and Approximation}, chapter 7: Model Reduction
  by Rational Interpolation, pages 297--334.
\newblock SIAM, 2017.

\bibitem{borcea2020reduced}
L.~Borcea, V.~Druskin, A.~Mamonov, M.~Zaslavsky, and J.~Zimmerling.
\newblock Reduced order model approach to inverse scattering.
\newblock {\em SIAM Journal on Imaging Sciences}, 13(2):685--723, 2020.

\bibitem{BorceaZimmerling}
L.~Borcea and J.~Zimmerling.
\newblock Personal communication.

\bibitem{BoDr}
Liliana Borcea and Vladimir Druskin.
\newblock Optimal finite difference grids for direct and inverse
  {S}turm-{L}iouville problems.
\newblock {\em Inverse Problems}, 18(4):979--1001, 2002.

\bibitem{borcea2011resistor}
Liliana Borcea, Vladimir Druskin, Fernando Guevara~Vasquez, and Alexander~V.
  Mamonov.
\newblock Resistor network approaches to electrical impedance tomography.
\newblock {\em Inverse Problems and Applications: Inside Out II, Math. Sci.
  Res. Inst. Publ}, 60:55--118, 2011.

\bibitem{BoDrMaMoZa}
Liliana Borcea, Vladimir Druskin, Alexander~V. Mamonov, Shari Moskow, and
  Mikhail Zaslavsky.
\newblock Reduced order models for spectral domain inversion: embedding into
  the continuous problem and generation of internal data.
\newblock {\em Inverse Problems}, 36(5), 2020.

\bibitem{borcea2014model}
Liliana Borcea, Vladimir Druskin, Alexander~V. Mamonov, and Mikhail Zaslavsky.
\newblock A model reduction approach to numerical inversion for a parabolic
  partial differential equation.
\newblock {\em Inverse Problems}, 30(12):125011, 2014.

\bibitem{borcea2017untangling}
Liliana Borcea, Vladimir Druskin, Alexander~V Mamonov, and Mikhail Zaslavsky.
\newblock Untangling nonlinearity in inverse scattering with data-driven
  reduced order models.
\newblock {\em Inverse Problems}, 34(6):065008, 2018.

\bibitem{borcea2019robust}
Liliana Borcea, Vladimir Druskin, Alexander~V. Mamonov, and Mikhail Zaslavsky.
\newblock Robust nonlinear processing of active array data in inverse
  scattering via truncated reduced order models.
\newblock {\em Journal of Computational Physics}, 381:1--26, 2019.

\bibitem{Diekmann2020ImagingWT}
Leon Diekmann and I.~Vasconcelos.
\newblock Imaging with the exact linearised {L}ippmann-{S}chwinger integral by
  means of redatumed in-volume wavefields.
\newblock {\em SEG Technical Program Expanded Abstracts}, 2020.

\bibitem{druskin2018nonlinear}
V.~Druskin, A.V. Mamonov, and M.~Zaslavsky.
\newblock A nonlinear method for imaging with acoustic waves via reduced order
  model backprojection.
\newblock {\em SIAM Journal on Imaging Sciences}, 11(1):164--196, 2018.

\bibitem{druskin2002three}
V.~Druskin and S.~Moskow.
\newblock Three-point finite-difference schemes, pad{\'e} and the spectral
  galerkin method. i. one-sided impedance approximation.
\newblock {\em Mathematics of computation}, 71(239):995--1019, 2002.

\bibitem{druskin1999gaussian}
Vladimir Druskin and Leonid Knizhnerman.
\newblock {Gaussian spectral rules for the three-point second differences: I. A
  two-point positive definite problem in a semi-infinite domain}.
\newblock {\em SIAM Journal on Numerical Analysis}, 37(2):403--422, 1999.

\bibitem{druskin2016direct}
Vladimir Druskin, Alexander~V. Mamonov, Andrew~E. Thaler, and Mikhail
  Zaslavsky.
\newblock Direct, nonlinear inversion algorithm for hyperbolic problems via
  projection-based model reduction.
\newblock {\em SIAM Journal on Imaging Sciences}, 9(2):684--747, 2016.

\bibitem{kac1974spectral}
IS~Kac and MG~Krein.
\newblock On the spectral functions of the string.
\newblock {\em Amer. Math. Soc. Transl. Ser.}, 103:19--102, 1974.

\bibitem{levinson2016solution}
Howard~W Levinson and Vadim~A Markel.
\newblock Solution of the nonlinear inverse scattering problem by t-matrix
  completion. i. theory.
\newblock {\em Physical Review E}, 94(4):043317, 2016.

\bibitem{St}
Herbert Stahl.
\newblock On the convergence of generalized {Pad\'e} approximants.
\newblock {\em Constructive Approximation}, 5:221--240, 1989.

\end{thebibliography}
\end{document}